\theoremstyle{plain}
\newtheorem{thm}{Theorem}[section]
\newtheorem{prop}[thm]{Proposition}
\newtheorem{lemma}[thm]{Lemma}
\newtheorem{cor}[thm]{Corollary}
\theoremstyle{definition}
\newtheorem{defn}[thm]{Definition}
\theoremstyle{remark}
\newtheorem{rem}[thm]{Remark}
\numberwithin{equation}{section}
\newcommand{\R}{\mathbb{R}} 
\newcommand{\Grad}{\nabla}  
\newcommand{\Div}{{\rm div}\,} 
\newcommand{\dx}{\,{\rm d}\xx}
\newcommand{\dxcomp}{\,{\rm d}x}
\newcommand{\dycomp}{\,{\rm d}y}
\newcommand{\dt}{\,{\rm d}t}  
\newcommand{\dr}{\,{\rm d}r} 
\newcommand{\ds}{\,{\rm d}s} 
\renewcommand{\rho}{\varrho}
\newcommand{\half}{\tfrac{1}{2}}  
\newcommand{\Cc}{C^\infty_{\rm c}}
\newcommand{\wild}{{\rm wild}}
\newcommand{\example}{{\rm ex}} % subscript for the counterexample
\newcommand{\name}[1]{\textsc{#1}} 
\newcommand{\uu}{\mathbf{u}} 
\newcommand{\xx}{\mathbf{x}} 
\newcommand{\vphi}{\boldsymbol{\varphi}}
\begin{document}
	
	\title{Failure of the least action admissibility principle in the context of the compressible Euler equations} 
	
	\author{Simon Markfelder\footnote{e-mail: \textsf{simon.markfelder@uni-konstanz.de}} \and Valentin Pellhammer\footnote{Corresponding author, e-mail: \textsf{valentin.pellhammer@uni-konstanz.de}}} 
	
	\date{\today}
	
	\maketitle

    \bigskip

    \centerline{Universit\"at Konstanz, Department of Mathematics and Statistics,} 

    \centerline{Post office box: 199, 78457 Konstanz, Germany} 

    \bigskip
	
	\begin{abstract} 
        Finding a proper solution concept for the multi-dimensional barotropic compressible Euler equations and related systems is still an unsolved problem. As revealed by convex integration, the classical notion of an admissible weak solutions (also known as weak entropy solutions) does not lead to uniqueness and allows for solutions which do not seem to be physical. For this reason, people have studied additional criteria in view of their ability to rule out the counterintuitive solutions generated by convex integration. Recently, in [H.~Gimperlein, M.~Grinfeld, R.~J.~Knops and M.~Slemrod: The least action admissibility principle, arXiv: 2409.07191 (2024)] it was suggested that the least action admissibility principle serves as the desired selection criterion. In this paper, however, we show that the least action admissibility principle rules out the solution which is intuitively the physically relevant one. Consequently, one either has to reconsider one's intuition, or the least action admissibility principle must be discarded. 
	\end{abstract}
	
	\bigskip
	
	\noindent\textbf{Keywords:} 
	Barotropic Euler Equations, Compressible Euler Equations, Admissibility Criteria, Least Action Admissibility Principle, Non-Uniqueness, Convex Integration, Weak Solutions
	
	\bigskip
	
	\noindent\textbf{MSC (2020) codes:} 76N10 (primary), 35Q31, 35D30, 35L65 (secondary) 

    % 76N10  : Existence, uniqueness, and regularity theory for compressible fluids and gas dynamics
    % 35Q31  : Euler equations 
    % 35D30  : Weak solutions to PDEs
    % 35L65  : Hyperbolic conservation laws
    
	\bigskip
	
	\tableofcontents
	
	\section{Introduction and main result} \label{sec:intro} 
    
    The quest for a good solution concept for multi-dimensional systems of conservation laws is an outstanding open problem. While for the scalar case as well as for one-dimensional systems, the notion of an \emph{admissible weak solution} (also known as \emph{weak entropy solution}) has led to a satisfactory well-posedness theory (see \name{Kru{\v z}kov}~\cite{Kruzkov70}, \name{Glimm}~\cite{Glimm65}, \name{Bressan}~et~al.~\cite{BreCraPic00}), the method of convex integration has shown that this notion does not lead to uniqueness in the context of multi-dimensional systems (see \name{De~Lellis}-\name{Sz{\'e}kelyhidi}~\cite{DelSze10}). 

    In particular, the barotropic compressible Euler equations in two space dimensions 	
	\begin{align}\label{eq:euler}
		\begin{split}
			\partial_t\rho + \Div (\rho \uu) &= 0,\\
			\partial_t(\rho\uu) + \Div(\rho \uu\otimes\uu)+\nabla p(\rho)&=0
		\end{split}
	\end{align}
    have been studied regarding non-uniqueness via convex integration. System \eqref{eq:euler} describes a fluid with mass density $\rho=\rho(t,\xx)\in\R^+$ and velocity field $\uu=\uu(t,\xx)=(u(t,\xx),v(t,\xx))\in\R^2$. Here $t\in [0,T)$ (with maximal time $T>0$) and $\xx=(x,y)\in \R^2$ denote time and space, respectively. The pressure $p$ is related to the density $\rho$ by the equation of state. For our purposes we choose
	\begin{align} \label{eq:pressure}
		p(\rho) = \rho^2,
	\end{align}
	which is a special case of the polytropic pressure law $p(\rho) = K\rho^\gamma$, $K>0$, $\gamma>1$ with $K=1$ and $\gamma=2$. One is usually interested in the Cauchy problem, i.e.~one endows the Euler equations \eqref{eq:euler} with initial condition 
    \begin{align}
		\begin{split}\label{eq:initialData}
			\rho(0,\xx) &= \rho_{0}(\xx),\\
			\uu(0,\xx) &= \uu_{0}(\xx),
		\end{split}
	\end{align}		
	where the functions $\rho_{0}\in L^{\infty}(\R^2;\R^+)$ and $\uu_{0}\in L^{\infty}(\R^2;\R^2)$ are given. 

    In order to define admissible weak solutions to the initial value problem \eqref{eq:euler}, \eqref{eq:initialData}, one has to specify the entropies for system \eqref{eq:euler}. It turns out that there is only one relevant entropy, namely the energy $\half \rho|\uu|^2 + P(\rho)$ with corresponding energy flux $\big(\half \rho |\uu|^2 + P(\rho) + p(\rho) \big) \uu$. Here, the pressure potential $P$ is defined by
    \[
		P(\rho) = \rho \int^{\rho}\frac{p(r)}{r^2}\dr,
	\]
    which turns into $P(\rho) = \rho^2$ due to our choice \eqref{eq:pressure} of the pressure law. Now, admissible weak solutions are defined as follows.
    \begin{defn}
        A pair $(\rho,\uu)\in L^\infty ((0,T)\times\R^2;\R^+\times\R^2)$ is a \emph{weak solution} of the initial value problem \eqref{eq:euler}, \eqref{eq:initialData} if the followng equations are satisfied for all test functions $(\phi,\vphi)\in \Cc([0,T)\times \R^2;\R\times \R^2)$:
        \begin{align*}
            \int_0^T \int_{\R^2}\Big[ \rho \partial_t \phi +\rho \uu \cdot\nabla \phi \Big] \dx \dt + \int_{\R^2}\rho_0\phi(0,\cdot) \dx &=0;\\
            \int_0^T \int_{\R^2}\Big[ \rho\uu\cdot\partial_t \vphi +\rho\uu\otimes\uu:\nabla \vphi +p(\rho)\Div \vphi \Big] \dx \dt + \int_{\R^2} \rho_0\uu_0\cdot \vphi(0,\cdot) \dx &=0.
        \end{align*}
        A weak solution $(\rho,\uu)$ is called \emph{admissible} if 
        \begin{align*}
            \int_0^T \int_{\R^2}\left[ \Big(\half\rho|\uu|^2 + P(\rho)\Big)\partial_t\psi + \Big(\half\rho|\uu|^2 +P(\rho) + p(\rho)\Big)\uu\cdot\nabla\psi \right]\dx\dt & \\
            +\int_{\R^2}\Big(\half\rho_0|\uu_0|^2+P(\rho_0)\Big)\psi(0,\cdot)\dx &\geq 0 
        \end{align*}
        for all $\psi\in\Cc([0,T)\times \R^2;\R_0^+)$.
    \end{defn}

    In this paper, we focus on the Riemann problem for \eqref{eq:euler}, i.e.~we choose the initial data \eqref{eq:initialData} to be given as
	\begin{align}\label{eq:RiemannData}
		(\rho_{0},\uu_{0})(\xx) = \begin{cases}
			(\rho_-,\uu_-),& y<0,\\
			(\rho_+,\uu_+),& y>0,
		\end{cases}
	\end{align}	
	with constant states $\rho_\pm\in\R^+$ and $\uu_\pm\in\R^2$. The data \eqref{eq:RiemannData} does not depend on the first space variable $x$. Hence, one may want to look for solutions which are independent of $x$ for any positive time $t>0$. This ansatz boils the problem down to a one-dimensional Riemann problem which can be solved by well-known methods and leads to a self-similar solution composed of constant states, shock waves, contact discontinuities and rarefaction waves. It is not difficult to see that this self-similar solution (interpreted as a function of $(t,\xx)\in[0,T)\times\R^2$ which in fact only depends on $(t,y)$) serves as an admissible weak solution for the two-dimensional problem, too. We will denote this solution as the \emph{1-D solution} to the Riemann problem \eqref{eq:euler}, \eqref{eq:RiemannData}.

    Utilizing the method of convex integration as established by \name{De~Lellis}-\name{Sz{\'e}kelyhidi}~\cite{DelSze09,DelSze10}, \name{Chiodaroli}-\name{De~Lellis}-\name{Kreml}~\cite{ChiDelKre15} showed that for some initial states $(\rho_\pm,\uu_\pm)$ there exist infinitely many admissible weak solutions to \eqref{eq:euler}, \eqref{eq:RiemannData} in addition to the 1-D solution. These solutions are genuinely two-dimensional, i.e.~they do depend on both $x$ and $y$ (in addition to $t$). In other words, they exhibit a break of symmetry which is counterintuitive. For this reason and due to the non-uniqueness, one has checked whether additional criteria are able to rule out the seemingly unphysical solutions, and maybe restore uniqueness. The natural candidate that one might expect to be selected by such additional criteria is the 1-D solution. 

    In particular, \name{Chiodaroli}-\name{Kreml}~\cite{ChiKre14} consider the \emph{global maximal dissipation criterion} (also known as the \emph{entropy rate admissibility criterion}) which was proposed by \name{Dafermos}~\cite{Dafermos73}. In \cite{ChiKre14} solutions are constructed via convex integration which dissipate more energy than the 1-D solution. Consequently, the global maximal dissipation criterion does not favor the 1-D solution. 
    
    Moreover, a local version of the maximal dissipation criterion has been studied by the first author~\cite{Markfelder24}. The result is the same as in the global case: there exists a solution generated by convex integration which beats the 1-D solution, i.e.~the local version of the maximal dissipation criterion does not select the 1-D solution too. 

    Motivated by the classical least action principle used in quantum mechanics,  \name{Gimperlein}-\name{Grinfeld}-\name{Knops}-\name{Slemrod}~\cite{GGKS24pre} recently proposed another criterion to select the physically relevant solution to the Riemann problem \eqref{eq:euler}, \eqref{eq:RiemannData}. For any admissible weak solution $(\rho,\uu)\in L^\infty ((0,T)\times\R^2;\R^+\times\R^2)$, they consider its \emph{action} given by
	\begin{align}
		\mathcal{A}[\rho,\uu] := \int_{0}^{T}\int_{-L_1}^{L_1}\int_{-L_2}^{L_2} \Big(\half \rho|\uu|^2 - P(\rho) \Big)\dycomp\dxcomp\dt, \label{eq:actionDef}
	\end{align}
	  where $T>0$ is given and $L_1,L_2>0$ are chosen suitably.

    \begin{rem} \label{rem:choice-of-L}
        Goal of this paper is to prove Theorem~\ref{thm:mainTheorem} below. To this end, we only want to compare the action of two particular solutions. We may thus choose $L_2>0$ sufficiently large such that the two solutions coincide for all $(t,\xx)\in [0,T)\times \R^2$ with $|y|>L_2$. Moreover, the two solutions will be periodic in $x$ with, say, period $2$. Consequently, we will choose $L_1=1$. Alternatively, one may periodize our solutions also in $y$ by proceeding as in \cite[Sect.~6]{ChiKre14}. 
    \end{rem}
      
    We state the criterion that was proposed in \cite{GGKS24pre} in the following definition.
    
	\begin{defn} \label{defn:leastActionPrinciple}
		An admissible weak solution $(\rho,\uu)$ to \eqref{eq:euler}, \eqref{eq:RiemannData} fulfills the \emph{least action admissibility principle} if there is no other admissible weak solution $(\widetilde{\rho},\widetilde{\uu})$ emerging from the same initial data \eqref{eq:RiemannData} with the property 
		\begin{align}\label{eq:ActionInequality}
			\mathcal{A}[\widetilde{\rho},\widetilde{\uu}]<\mathcal{A}[\rho,\uu].
		\end{align}
	\end{defn}

    The results in \cite{GGKS24pre} suggest that the 1-D solution satisfies the least action admissibility principle. The aim of this note is to show that this is wrong. More precisely, we prove the following.
    
	\begin{thm}\label{thm:mainTheorem}
		For each $T>0$, there exist\footnote{In fact, such Riemann data $(\rho_\pm,\uu_\pm)$ are independent of $T>0$.} Riemann initial data $(\rho_\pm,\uu_\pm)\in\R^+\times\R^2$ such that the 1-D solution to \eqref{eq:euler}, \eqref{eq:RiemannData} does not fulfill the least action admissibility principle.
	\end{thm}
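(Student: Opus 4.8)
The plan is to exhibit explicit Riemann data for which a convex-integration solution has strictly smaller action than the 1-D solution, by exploiting the sign difference between the energy $\tfrac12\rho|\uu|^2+P(\rho)$ (which appears in the admissibility inequality) and the Lagrangian density $\tfrac12\rho|\uu|^2-P(\rho)$ (which appears in the action). The key observation is that the convex-integration machinery of \name{Chiodaroli}-\name{De~Lellis}-\name{Kreml}~\cite{ChiDelKre15} produces, for suitable data, two-dimensional solutions in which the ``fan'' region between the outgoing waves is filled with a subsolution whose velocity has large modulus $|\uu|^2$ (indeed, convex integration needs to generate enough kinetic energy to close the relaxed problem) while the density $\rho$ there stays comparable to, or even below, that of the 1-D solution. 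Since $P(\rho)=\rho^2$ is convex and enters the action with a \emph{minus} sign, inflating $|\uu|^2$ in the intermediate region increases $\tfrac12\rho|\uu|^2$ and hence \emph{lowers} $-$ via the $+\tfrac12\rho|\uu|^2$ term $-$ no, more precisely \emph{raises} the action; so the mechanism we actually want is the reverse: we want the convex-integration solution to have \emph{less} action, which we achieve by choosing data so that the 1-D solution contains a configuration (e.g.\ two shocks with a high-pressure intermediate state) whose large $P(\rho)$ contribution dominates, and then replacing the high-density intermediate plateau by a convex-integration fan with lower average density and a velocity field whose kinetic energy is still controlled. The bookkeeping is: $\mathcal A[\text{1-D}] - \mathcal A[\text{CI}] = \int(\tfrac12\rho_{\mathrm{1D}}|\uu_{\mathrm{1D}}|^2 - \tfrac12\rho_{\mathrm{CI}}|\uu_{\mathrm{CI}}|^2) - \int(P(\rho_{\mathrm{1D}})-P(\rho_{\mathrm{CI}}))$, and we design the data so the second integral is large and positive while the first is small.

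Concretely, the steps I would carry out are: (i) recall the precise non-uniqueness statement of \cite{ChiDelKre15}, namely the admissible set of Riemann data $(\rho_\pm,\uu_\pm)$ for which infinitely many two-dimensional admissible weak solutions exist, together with the structure of these solutions (self-similar fan of two shocks/contacts bounding a region occupied by a subsolution with prescribed constant density $\rho_1$ and a divergence-free-in-the-fan velocity of prescribed kinetic energy density $C$); (ii) within that admissible parameter set, single out data for which the 1-D solution itself consists of two outgoing shocks with an intermediate constant state $(\rho_m,\uu_m)$ and track the three contributions to $\mathcal A[\text{1-D}]$ coming from the left state, the intermediate plateau, and the right state, using that the fan region occupies a space-time triangle whose size is fixed by the wave speeds; (iii) compute $\mathcal A[\text{CI}]$ for the convex-integration solution with the same outer speeds, where the intermediate triangle now carries density $\rho_1$ and Lagrangian density $\tfrac12\rho_1 C - P(\rho_1)$ (constant in the fan by construction), so the difference of the two actions reduces to an \emph{algebraic} inequality $\tfrac12\rho_m|\uu_m|^2 - P(\rho_m) > \tfrac12\rho_1 C - P(\rho_1)$ in finitely many real parameters; (iv) verify that this inequality, together with the admissibility constraints of \cite{ChiDelKre15} (which bound $C$ from below in terms of $\rho_1$ and the boundary data but leave room), can be simultaneously satisfied $-$ e.g.\ by taking $\rho_1$ small, $\rho_m$ large, and the original data with a large density jump, so that the $-P(\rho_m)=-\rho_m^2$ term on the left is dominated but the $+\tfrac12\rho_m|\uu_m|^2$ term, with $\uu_m$ chosen appropriately, is not too negative a comparison; (v) finally invoke Remark~\ref{rem:choice-of-L} to fix $L_1=1$ and $L_2$ large enough that both solutions agree with the constant states $(\rho_\pm,\uu_\pm)$ outside $|y|<L_2$, so that the action integrals are finite and their difference is exactly the contribution of the bounded fan region, making the comparison legitimate for every $T>0$.

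The main obstacle I anticipate is step (iv): the convex-integration admissibility conditions of \cite{ChiDelKre15} are a delicate system of inequalities relating $\rho_1$, the kinetic energy constant $C$, and the four boundary data components, and one must check that the open region they cut out in parameter space has nonempty intersection with the region where $\tfrac12\rho_m|\uu_m|^2 - \rho_m^2 > \tfrac12\rho_1 C - \rho_1^2$ \emph{and} where the 1-D solution genuinely has the two-shock structure used in step (ii). I would handle this by not trying to optimize but instead producing a single explicit tuple $(\rho_-,\uu_-,\rho_+,\uu_+)$ $-$ plausibly with $\uu_\pm$ having opposite signs in the $y$-component (a ``colliding'' configuration, which forces two shocks and a high-density plateau in the 1-D solution and is exactly the regime where \cite{ChiDelKre15} applies) $-$ and then verifying all the finitely many inequalities numerically-then-rigorously. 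A secondary, more technical point is to make sure the convex-integration solution can be taken $x$-periodic with period $2$ (so that $L_1=1$ is the right choice) and that it genuinely emanates from the \emph{same} Riemann data \eqref{eq:RiemannData}, both of which are already built into the construction in \cite{ChiDelKre15,ChiKre14}; I would cite those constructions rather than reprove them. Everything else $-$ the evaluation of $\mathcal A$ on piecewise-constant self-similar profiles over a triangle $-$ is elementary area-times-value arithmetic.
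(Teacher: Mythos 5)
Your plan is the direct comparison: take a self-similar convex-integration solution of the \cite{ChiDelKre15} fan type on the whole interval $[0,T)$ and try to arrange, via the algebraic inequality in your steps (iii)--(iv), that the fan's Lagrangian density beats the 1-D intermediate plateau. This is precisely the comparison that fails. In the two-shock regime you propose (colliding data, high-density plateau), the 1-D intermediate state has $\uu_M=0$, so its action density is $-P(\rho_M)=-\rho_M^2$, which is very negative; since $P$ enters the action with a minus sign, a ``large $P(\rho)$ contribution'' \emph{lowers} the 1-D action rather than dominating it, so your design principle ``$\rho_m$ large, $\rho_1$ small'' pushes the inequality in the wrong direction (this sign slip is already visible in your bookkeeping identity, where the $P$-difference enters with a minus sign). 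Moreover, the convex-integration fan must carry kinetic energy density $\tfrac12\rho_1 C_1$ compatible with the Rankine--Hugoniot and admissibility conditions, and the known result (cited in the paper from the very article proposing the criterion) is that the fan-type solutions of \cite{ChiKre14,ChiDelKre15} have \emph{larger} action than the 1-D solution. So your step (iv) is not merely ``delicate'': there is no argument that the required open region in parameter space is nonempty, and the available evidence says it is not. (A further, smaller inaccuracy: the CI fan does not have ``the same outer speeds'' as the 1-D shocks; its edges $\mu_0,\mu_1$ are in general strictly faster, so the comparison is not a pointwise one over a common triangle.)

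The idea your proposal is missing is the paper's actual mechanism: do \emph{not} run convex integration up to time $T$. Instead, run it only on $[0,T_0]$ with $T_0=T/2$, then solve classically (Glimm/1-D Riemann theory) the initial-value problem posed by the piecewise-constant state $(\rho_-,\uu_-),(\rho_1,\uu_1),(\rho_+,\uu_+)$ that the subsolution attains at $t=T_0$, and glue. The concatenation is still an admissible weak solution because at $t=T_0$ the kinetic energy jumps \emph{down}, from $\tfrac12\rho_1 C_1$ to $\tfrac12\rho_1|\uu_1|^2$, which is allowed by the subsolution condition $C_1>|\uu_1|^2$ and only helps the energy inequality. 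This downward jump is also exactly what wins the action comparison: on $[0,T_0]$ the convex-integration part has larger action than the 1-D solution, but after $T_0$ the glued classical evolution (two pairs of shocks emanating from $y=\mu_0T_0$ and $y=\mu_1T_0$, chosen not to interact before $T$) has much smaller action density, and an explicit computation of the piecewise-linear-in-$t$ integrand shows the total over $[0,T]$ drops below that of the 1-D solution for a concrete choice of Riemann data. Without this time-gluing step (or some substitute for it), the route you outline cannot be completed.
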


    Similar to the strategy used in \cite{ChiKre14} and \cite{Markfelder24}
    to reject criteria regarding maximal dissipation, we will provide explicit Riemann initial data $(\rho_\pm,\uu_\pm)\in\R^+\times\R^2$ to which we construct a solution $(\widetilde{\rho},\widetilde{\uu})$ via convex integration whose action is lower than the action of the 1-D solution $(\rho,\uu)$, i.e.~we show that \eqref{eq:ActionInequality} holds. This proves Theorem~\ref{thm:mainTheorem}.

    The consequence of Theorem~\ref{thm:mainTheorem} is the same as for the criteria regarding maximal dissipation studied in \cite{ChiKre14} and \cite{Markfelder24}: if one insists that the physically relevant solution to the Riemann problem \eqref{eq:euler}, \eqref{eq:RiemannData} satisfies the least action admissibility principle, then the 1-D solution is ruled out. Or, if one expects the 1-D solution to be the physically relevant solution, then the least action admissibility principle must be discarded. 

    This paper is organized as follows. 
    Using convex integration, we construct a class of admissible weak solutions with a very specific shape in Section \ref{sec:wildSolutions}. In Section~\ref{sec:proof} we choose explicit initial data \eqref{eq:RiemannData} and, using the results of Section~\ref{sec:wildSolutions}, obtain additional weak solutions alongside the 1-D solution. Furthermore, we show that these solutions beat the 1-D solution in the sense that they have a smaller action, thereby proving Theorem \ref{thm:mainTheorem}. Finally, in Section \ref{sec:concluding-remarks}, we illustrate the mechanism that enables the success of the proof.

	\section{Shape of the solutions constructed in this paper}\label{sec:wildSolutions} 

    The approach of applying convex integration in the context of the Riemann problem \eqref{eq:euler}, \eqref{eq:RiemannData}, which was established in \cite{ChiDelKre15}, leads to the following statement. 
	
	\begin{prop}\label{prop:convexIntegration} 
		Let $\rho_\pm\in\R^+$, $\uu_\pm\in\R^2$ and $T_0>0$. Assume that there exist numbers $\mu_0,\mu_1\in\R$, $\rho_1\in\R^+$, $\uu_1\in \R^2$, $\gamma_1,\delta_1\in\R$ and $C_1\in\R^+$ which fulfill the following algebraic equations and inequalities:
		\begin{itemize} 
			\item Order of speeds:
			\[
			\mu_0<\mu_1;
			\]
            
			\item Rankine Hugoniot conditions on the left interface:
			\begin{align*}
				\mu_0(\rho_- - \rho_1) &= \rho_-v_- - \rho_1v_1 ; \\
				\mu_0(\rho_-u_- - \rho_1u_1) &= \rho_-u_-v_- - \rho_1\delta_1 ; \\
				\mu_0(\rho_-v_--\rho_1v_1) &=\rho_-(v_-)^2 - \rho_1\left(\frac{C_1}{2}-\gamma_1\right)+p( \rho_-) - p(\rho_1);
			\end{align*}
            
			\item Rankine Hugoniot conditions on the right interface:
			\begin{align*}
				\mu_1(\rho_1 - \rho_+) &= \rho_1v_1 - \rho_+v_+ ;\\
				\mu_1(\rho_1u_1 - \rho_+u_+) &= \rho_1\delta_1 - \rho_+u_+v_+ ;\\
				\mu_1(\rho_1v_1-\rho_+v_+) &=\rho_1\left(\frac{C_1}{2}-\gamma_1\right) - \rho_+(v_+)^2 + p(\rho_1) - p(\rho_+);
			\end{align*}
            
			\item Subsolution condition:
			\begin{align}
				C_1 - (u_1)^2 - (v_1)^2&>0; \label{eq:subsolution-condition} \\
				\left(\frac{C_1}{2} - (u_1)^2 + \gamma_1\right)\left(\frac{C_1}{2} - (v_1)^2 - \gamma_1\right) - (\delta_1 - u_1v_1)^2&>0 \notag ;
			\end{align}
            
			\item Admissibility condition on the left interface:
			\begin{align*}
				&\mu_0\left(\rho_-\frac{|\uu_-|^2}{2} + P(\rho_-) - \rho_1\frac{C_1}{2}-P(\rho_1)\right)\\
				&\leq \left(\rho_-\frac{|\uu_-|^2}{2} + P(\rho_-) + p(\rho_-)\right)v_- - \left(\rho_1\frac{C_1}{2} + P(\rho_1) + p(\rho_1)\right)v_1;
			\end{align*}
            
			\item Admissibility condition on the right interface:
			\begin{align*}
				&\mu_1\left(\rho_1 \frac{C_1}{2} + P(\rho_1) - \rho_+\frac{|\uu_+|^2}{2} - P(\rho_+)\right)\\
				&\leq \left(\rho_1\frac{C_1}{2}+P(\rho_1)+p(\rho_1)\right)v_1 - \left(\rho_+\frac{|\uu_+|^2}{2} + P(\rho_+) + p(\rho_+)\right)v_+.
			\end{align*}
		\end{itemize}		
		Then there exist infinitely many admissible weak solutions $(\rho,\uu)\in L^\infty((0,T_0)\times \R^2;\R^+\times\R^2)$ of the barotropic Euler equations \eqref{eq:euler} with initial data \eqref{eq:initialData} in the following sense:		
		\begin{align}
            \int_0^{T_0}\int_{\R^2} \Big[ \rho \partial_t \phi + \rho \uu \cdot \Grad\phi \Big] \dx\dt + \int_{\R^2} \rho_0 \phi(0,\cdot) \dx - \int_{-\infty}^{\mu_0 T_0} \int_{\R} \rho_- \phi(T_0,\cdot) \dxcomp \dycomp & \notag \\
            - \int_{\mu_0 T_0}^{\mu_1 T_0} \int_{\R} \rho_1 \phi(T_0,\cdot) \dxcomp \dycomp - \int_{\mu_1 T_0}^{\infty} \int_{\R} \rho_+ \phi(T_0,\cdot) \dxcomp \dycomp &= 0, \label{eq:weakSolinThm-mass} \\
            \int_0^{T_0}\int_{\R^2} \Big[ \rho \uu \cdot \partial_t \vphi + \rho \uu \otimes \uu : \Grad\vphi + p(\rho) \Div \vphi \Big] \dx\dt + \int_{\R^2} \rho_0 \uu_0 \cdot \vphi(0,\cdot) \dx & \notag \\
            - \int_{-\infty}^{\mu_0 T_0} \int_{\R} \rho_- \uu_- \cdot \vphi(T_0,\cdot) \dxcomp \dycomp - \int_{\mu_0 T_0}^{\mu_1 T_0} \int_{\R} \rho_1 \uu_1 \cdot \vphi(T_0,\cdot) \dxcomp \dycomp & \notag \\ 
            - \int_{\mu_1 T_0}^{\infty} \int_{\R} \rho_+ \uu_+ \cdot \vphi(T_0,\cdot) \dxcomp \dycomp &= 0 , \label{eq:weakSolinThm-mom} \\
			\int_0^{T_0}\int_{\R^2} \Big[ \Big(\half \rho |\uu|^2 + P(\rho) \Big) \partial_t \psi + \Big(\half \rho |\uu|^2 + P(\rho) + p(\rho) \Big) \uu \cdot \Grad\psi \Big] \dx\dt & \notag \\
            + \int_{\R^2} \Big(\half \rho_0 |\uu_0|^2 + P(\rho_0) \Big) \psi(0,\cdot) \dx - \int_{-\infty}^{\mu_0 T_0} \int_{\R} \Big(\half \rho_- |\uu_-|^2 + P(\rho_-) \Big) \psi(T_0,\cdot) \dxcomp \dycomp & \notag \\ 
            - \int_{\mu_0 T_0}^{\mu_1 T_0} \int_{\R} \Big(\half \rho_1 C_1 + P(\rho_1) \Big)\psi(T_0,\cdot) \dxcomp \dycomp - \int_{\mu_1 T_0}^{\infty} \int_{\R} \Big(\half \rho_+ |\uu_+|^2 + P(\rho_+) \Big) \psi(T_0,\cdot) \dxcomp \dycomp &\geq 0 , \label{eq:weakSolinThm-en} 
		\end{align}		
        for all test functions $(\phi,\vphi,\psi)\in \Cc([0,T_0]\times \R^2;\R\times \R^2\times \R^+_0)$. Additionally all these solutions fulfill
		\begin{itemize}
			\item $\rho(t,\xx) = \rho_1$ and $|\uu(t,\xx)|^2 = C_1$ for a.e.~$(t,\xx)\in\Gamma_1$,
            \item $(\rho,\uu)(t,\xx)=(\rho_\pm,\uu_\pm)$ for a.e.~$(t,\xx)\in\Gamma_\pm$,
		\end{itemize}
        where $\Gamma_-,\Gamma_1,\Gamma_+$ are given by
        \begin{align*}
            \Gamma_- &:= \{(t,\xx): y<\mu_0 t,\ 0<t<T_0 \}, \\
		      \Gamma_1 &:= \{(t,\xx): \mu_0t<y<\mu_1 t,\ 0<t<T_0 \}, \\
            \Gamma_+ &:= \{(t,\xx): y>\mu_1 t,\ 0<t<T_0 \}. 
        \end{align*}
        For any $L>0$, infinitely many among these solutions are periodic in $x$ with period $L$. 
	\end{prop}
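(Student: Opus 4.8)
The plan is to derive Proposition~\ref{prop:convexIntegration} by casting the Riemann problem as a piecewise-constant ``fan subsolution'' in the spirit of \name{Chiodaroli}-\name{De~Lellis}-\name{Kreml}~\cite{ChiDelKre15}, and then invoking the convex integration machinery of \name{De~Lellis}-\name{Sz{\'e}kelyhidi}~\cite{DelSze09,DelSze10} on the middle region $\Gamma_1$. Concretely, one looks for a solution which equals the constant state $(\rho_\pm,\uu_\pm)$ on $\Gamma_\pm$ and which, on $\Gamma_1$, has constant density $\rho\equiv\rho_1$, constant momentum $\rho_1\uu$ with the averaged quantities prescribed by $\uu_1$, and a momentum-flux matrix whose trace is pinned by $C_1$ and whose traceless part is governed by the auxiliary parameter $\gamma_1$ (the off-diagonal entry being $\delta_1$). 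The six Rankine--Hugoniot equations are exactly the statement that the piecewise-constant triple $(\rho,\rho\uu,\mathbb{U})$ — with $\mathbb{U}$ the momentum-flux deviation encoded via $C_1,\gamma_1,\delta_1$ — is a weak solution of the linear ``relaxed'' system across the two interfaces $y=\mu_0 t$ and $y=\mu_1 t$; the ordering $\mu_0<\mu_1$ guarantees that $\Gamma_-,\Gamma_1,\Gamma_+$ genuinely partition the upper half of spacetime.

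Next I would verify that the prescribed state on $\Gamma_1$ is a \emph{strict subsolution}, i.e.\ that the pointwise constraint $\tfrac12|\uu|^2 + \text{(flux deviation)} < \tfrac{C_1}{2}$ defining the relaxation set $K$ holds with the chosen constants; this is precisely what the two strict inequalities under ``subsolution condition'' assert, the first being positivity of the relevant density-normalized energy gap and the second being positive-definiteness of the $2\times 2$ matrix built from $C_1/2,\gamma_1,\delta_1,\delta_1-u_1v_1$. Granting this, the abstract convex integration theorem (a Baire-category / Tartar-wave argument applied on the open set $\Gamma_1$, using plane-wave perturbations compactly supported in $\Gamma_1$ that leave $\rho$ and the linear momentum equations untouched) yields infinitely many genuine weak solutions $(\rho,\uu)$ that agree with the subsolution outside $\Gamma_1$, satisfy $|\uu|^2 = C_1$ a.e.\ on $\Gamma_1$, and have the same initial trace and the same trace at time $T_0$ as the subsolution. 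Periodicity in $x$ with period $L$ is obtained simply by choosing all the perturbation plane waves to have $x$-wavenumbers in $\tfrac{2\pi}{L}\mathbb{Z}$, which the construction permits.

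It then remains to read off the three integral identities \eqref{eq:weakSolinThm-mass}--\eqref{eq:weakSolinThm-en}. Since $(\rho,\uu)$ solves \eqref{eq:euler} weakly on $(0,T_0)\times\R^2$ with a possible boundary contribution at $t=T_0$, integration by parts against $(\phi,\vphi)$ produces exactly the boundary terms over the three regions $\{y<\mu_0 T_0\}$, $\{\mu_0 T_0<y<\mu_1 T_0\}$, $\{y>\mu_1 T_0\}$ with the constant values $\rho_-,\rho_1,\rho_+$ and $\rho_-\uu_-,\rho_1\uu_1,\rho_+\uu_+$ — here one uses that on $\Gamma_1$ the \emph{density} is literally $\rho_1$ and the \emph{momentum} is literally $\rho_1\uu_1$ even though $\uu$ itself oscillates. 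For the energy inequality \eqref{eq:weakSolinThm-en} one combines the two ``admissibility condition'' inequalities (which say the subsolution's energy is entropy-admissible across each interface) with the fact that on $\Gamma_1$ the energy density of the true solution equals $\tfrac12\rho_1 C_1 + P(\rho_1)$, giving the claimed boundary term $\tfrac12\rho_1 C_1 + P(\rho_1)$ in the middle strip. I expect the main technical obstacle to be the bookkeeping that separates the \emph{subsolution} (piecewise constant, used only to satisfy the algebraic constraints and the entropy inequalities) from the \emph{genuine oscillatory solution} produced by convex integration, and in particular checking that the localization of the perturbations to $\Gamma_1$ is compatible with prescribing the time-$T_0$ trace; this is exactly the point where one must cite the refined version of the convex integration lemma from \cite{ChiDelKre15} rather than the original \cite{DelSze10} statement.
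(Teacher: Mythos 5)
Your overall route -- fan partition of space--time, piecewise-constant subsolution encoded by $(\rho_1,\uu_1,C_1,\gamma_1,\delta_1)$, the Rankine--Hugoniot conditions as the weak form of the relaxed linear system across $y=\mu_0t$ and $y=\mu_1t$, the strict subsolution condition, convex integration localized in $\Gamma_1$, and the admissibility conditions yielding the energy inequality -- is exactly the argument the paper appeals to (its ``proof'' is a citation of \cite{ChiDelKre15}, \cite{ChiKre14} and \cite{Markfelder}). There is, however, one genuine flaw in your justification of the boundary terms at $t=T_0$: you assert that on $\Gamma_1$ ``the momentum is literally $\rho_1\uu_1$ even though $\uu$ itself oscillates.'' This cannot be: the solutions produced by convex integration satisfy $\rho\equiv\rho_1$ and $|\uu|^2=C_1$ a.e.\ in $\Gamma_1$, so if the momentum were pointwise equal to $\rho_1\uu_1$ one would conclude $|\uu_1|^2=C_1$, contradicting the strict inequality \eqref{eq:subsolution-condition}. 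Only the density is constant in $\Gamma_1$; the momentum genuinely oscillates, also at times arbitrarily close to $T_0$, so integration by parts against the oscillatory solution does not by itself produce the terms with $\rho_1\uu_1$ in \eqref{eq:weakSolinThm-mass}--\eqref{eq:weakSolinThm-mom}.

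What is true, and what these identities encode, is that the \emph{weak time trace} of $(\rho,\rho\uu)$ at $t=T_0$ equals the subsolution's values $(\rho_1,\rho_1\uu_1)$ on the strip $\mu_0T_0<y<\mu_1T_0$; this has to be built into the scheme rather than read off pointwise. The standard mechanism is that the perturbations are compactly supported in the open set $\Gamma_1$, so every subsolution in the iteration equals $(\rho_1,\rho_1\uu_1)$ at $t=T_0$, and the Baire-category argument is carried out in (a bounded subset of) $C([0,T_0];L^2_w)$, i.e.\ in a topology uniform in time and weak in space; the uniform $L^\infty$ bounds on the fluxes together with the linear equations give the equicontinuity in time that passes the $t=T_0$ trace to the limit solutions, while the a.e.\ constraint $|\uu|^2=C_1$ lives on the interior of $\Gamma_1$ and is not in conflict with the trace on the null slice $\{t=T_0\}$. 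Note also that the proposition of \cite{ChiDelKre15} you propose to invoke is stated for global-in-time fan solutions and does not prescribe any final-time trace, so citing it verbatim does not close this step; the finite-time-interval version with the $T_0$-trace (cf.\ the paper's remark following the proposition and \cite[Thm.~7.3.4 and Prop.~7.3.5]{Markfelder}) is precisely the refinement needed. Once that trace statement is in place, the rest of your argument -- including the energy boundary term $\half\rho_1C_1+P(\rho_1)$, which does follow from $\rho=\rho_1$ and $|\uu|^2=C_1$ a.e.\ in $\Gamma_1$ together with the two interface admissibility conditions -- goes through as in the references.
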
 

    Proposition~\ref{prop:convexIntegration} can be proved by applying convex integration in the region $\Gamma_1$. We refer to \cite[Props.~3.6 and 5.1]{ChiDelKre15}, \cite[Props.~3.1 and 4.1]{ChiKre14} or \cite[Thm.~7.3.4 and Prop.~7.3.5]{Markfelder} for more details. 

    \begin{rem}
        The reader may have noticed that the solutions studied in the aforementioned papers are defined globally in time, in contrast to Proposition~\ref{prop:convexIntegration} which considers solutions on a finite time interval $[0,T_0]$. It is however easy to observe that the method of \cite{ChiDelKre15} can be also applied on a finite time interval $[0,T_0]$. 
    \end{rem}

	Theorem \eqref{prop:convexIntegration} yields a solution on the time interval $[0,T_0]$. To define a global-in-time solution for the Riemann problem \eqref{eq:euler}, \eqref{eq:RiemannData}, we consider another initial value problem at the time level $t=T_0$ and will glue the two solutions together. 
	
	\begin{lemma}\label{lemma:Solution_t>T} 
		Let $\rho_\pm\in\R^+$, $\uu_\pm\in\R^2$, $T>T_0>0$, $\mu_0<\mu_1$, $\rho_1\in\R^+$ and $\uu_1\in\R^2$. Then the initial value problem \eqref{eq:euler} together with data
		\begin{align}\label{eq:initialDataAtT0}
			(\rho,\uu)(T_0,\xx) :=\begin{cases}
				(\rho_-,\uu_-),&y<\mu_0 T_0, \\
				(\rho_1,\uu_1),&\mu_0 T_0<y<\mu_1 T_0, \\
				(\rho_+,\uu_+),&\mu_1 T_0<y,
			\end{cases}
		\end{align}
		has an admissible weak solution $(\rho,\uu) \in L^\infty((T_0,T)\times\R^2;\R^+\times \R^2)$. In particular, the following equations and inequalities hold for all test functions $(\phi,\vphi,\psi)\in \Cc([T_0,T)\times \R^2;\R\times \R^2\times \R^+_0)$: 
        \begin{align}
            \int_{T_0}^T \int_{\R^2} \Big[ \rho \partial_t \phi + \rho \uu \cdot \Grad\phi \Big] \dx\dt + \int_{-\infty}^{\mu_0 T_0} \int_{\R} \rho_- \phi(T_0,\cdot) \dxcomp \dycomp + \int_{\mu_0 T_0}^{\mu_1 T_0} \int_{\R} \rho_1 \phi(T_0,\cdot) \dxcomp \dycomp & \notag \\
            + \int_{\mu_1 T_0}^{\infty} \int_{\R} \rho_+ \phi(T_0,\cdot) \dxcomp \dycomp &= 0; \label{eq:weakSolinLemma-mass} \\
            \int_{T_0}^T \int_{\R^2} \Big[ \rho \uu \cdot \partial_t \vphi + \rho \uu \otimes \uu : \Grad\vphi + p(\rho) \Div \vphi \Big] \dx\dt + \int_{-\infty}^{\mu_0 T_0} \int_{\R} \rho_- \uu_- \cdot \vphi(T_0,\cdot) \dxcomp \dycomp & \notag \\
            + \int_{\mu_0 T_0}^{\mu_1 T_0} \int_{\R} \rho_1 \uu_1 \cdot \vphi(T_0,\cdot) \dxcomp \dycomp + \int_{\mu_1 T_0}^{\infty} \int_{\R} \rho_+ \uu_+ \cdot \vphi(T_0,\cdot) \dxcomp \dycomp &= 0; \label{eq:weakSolinLemma-mom} \\
			\int_{T_0}^T \int_{\R^2} \Big[ \Big(\half \rho |\uu|^2 + P(\rho) \Big) \partial_t \psi + \Big(\half \rho |\uu|^2 + P(\rho) + p(\rho) \Big) \uu \cdot \Grad\psi \Big] \dx\dt & \notag \\
            + \int_{-\infty}^{\mu_0 T_0} \int_{\R} \Big(\half \rho_- |\uu_-|^2 + P(\rho_-) \Big) \psi(T_0,\cdot) \dxcomp \dycomp & \notag \\ 
            + \int_{\mu_0 T_0}^{\mu_1 T_0} \int_{\R} \Big(\half \rho_1 |\uu_1|^2 + P(\rho_1) \Big)\psi(T_0,\cdot) \dxcomp \dycomp & \notag \\
            + \int_{\mu_1 T_0}^{\infty} \int_{\R} \Big(\half \rho_+ |\uu_+|^2 + P(\rho_+) \Big) \psi(T_0,\cdot) \dxcomp \dycomp &\geq 0. \label{eq:weakSolinLemma-en} 
		\end{align}

		\begin{proof}
			The given data does not depend on $x$. We can hence use the classical result of \name{Glimm}~\cite{Glimm65} for initial value problems in one space dimension to obtain an admissible weak solution to the corresponding one-dimensional Cauchy problem. This solution also solves the two-dimensional problem \eqref{eq:euler}, \eqref{eq:initialDataAtT0} (by interpreting the one-dimensional solution as a function of $(t,\xx)\in [T_0,T)\times \R^2$ which is independent of $x$).
		\end{proof}
	\end{lemma}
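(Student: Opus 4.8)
The plan is to exploit that the data \eqref{eq:initialDataAtT0} does not depend on the first space variable $x$, so that the two-dimensional Cauchy problem on the slab $[T_0,T)\times\R^2$ reduces to a one-dimensional one. First I would read the right-hand side of \eqref{eq:initialDataAtT0} as the initial datum of the one-dimensional version of \eqref{eq:euler} in the variables $(t,y)$, with unknowns $\ov\rho(t,y)$ and $\ov\uu(t,y)=(\ov u,\ov v)(t,y)$: this datum is piecewise constant, taking the values $(\rho_-,\uu_-)$, $(\rho_1,\uu_1)$, $(\rho_+,\uu_+)$ on $\{y<\mu_0 T_0\}$, $\{\mu_0 T_0<y<\mu_1 T_0\}$, $\{y>\mu_1 T_0\}$. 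Invoking the classical existence theory for one-dimensional systems of conservation laws --- namely \name{Glimm}~\cite{Glimm65} --- I would obtain a global-in-time admissible weak solution $(\ov\rho,\ov\uu)\in L^\infty((T_0,T)\times\R;\R^+\times\R^2)$ of this one-dimensional initial value problem. (In the situation relevant for Theorem~\ref{thm:mainTheorem} the intermediate state $(\rho_1,\uu_1)$ will be chosen close to a constant state, so that the total variation of the datum is small and \name{Glimm}'s theorem applies directly; alternatively one can note that $\ov u$ enters only as an advected quantity and solve first the $2\times 2$ system for $(\ov\rho,\ov v)$.)

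Next I would set $(\rho,\uu)(t,\xx):=(\ov\rho,\ov\uu)(t,y)$, which certainly lies in $L^\infty((T_0,T)\times\R^2;\R^+\times\R^2)$, and verify \eqref{eq:weakSolinLemma-mass}--\eqref{eq:weakSolinLemma-en} for every test function $(\phi,\vphi,\psi)\in\Cc([T_0,T)\times\R^2;\R\times\R^2\times\R^+_0)$. Given such a triple, I would integrate the two-dimensional identities in $x$ and apply Fubini. Introducing $\Phi(t,y):=\int_\R\phi(t,x,y)\dxcomp$, $\Psi(t,y):=\int_\R\psi(t,x,y)\dxcomp$ and the analogous $x$-averages of the two components of $\vphi$ --- all smooth and compactly supported in $[T_0,T)\times\R$ --- every term in which $\partial_x$ hits a test function integrates to zero, since the coefficient in front of it ($\rho$, $\rho\uu\otimes\uu$, $p(\rho)$, the energy density or the energy flux) is $x$-independent and the test function has compact support; the remaining terms are precisely the terms of the one-dimensional weak formulation tested against $\Phi$, the $x$-averaged $\vphi$, and $\Psi$. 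Likewise each boundary integral at $t=T_0$ reduces, by Fubini, to the corresponding initial term of the one-dimensional problem --- for instance $\int_{-\infty}^{\mu_0 T_0}\int_\R\rho_-\phi(T_0,\cdot)\dxcomp\dycomp=\int_{-\infty}^{\mu_0 T_0}\rho_-\Phi(T_0,y)\dycomp$ --- and the three of them reassemble the initial contribution $\int_\R\ov\rho(T_0,y)\Phi(T_0,y)\dycomp$ coming from the datum \eqref{eq:initialDataAtT0}. Since $(\ov\rho,\ov\uu)$ satisfies the one-dimensional weak formulation against all one-dimensional test functions, in particular against $\Phi$, $\Psi$ and the $x$-averaged $\vphi$, the identities \eqref{eq:weakSolinLemma-mass}, \eqref{eq:weakSolinLemma-mom} and the energy inequality \eqref{eq:weakSolinLemma-en} follow, which proves the lemma.

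I expect the one genuine subtlety to be the applicability of the one-dimensional existence result to the given datum: to have the lemma in the stated generality (arbitrary $\mu_0<\mu_1$, $\rho_1\in\R^+$, $\uu_1\in\R^2$) one should either restrict to data of small total variation --- which is all that the proof of Theorem~\ref{thm:mainTheorem} requires --- or invoke sharper one-dimensional well-posedness results. The reduction to one dimension, the lifting, and the verification of the weak forms sketched above are otherwise routine.
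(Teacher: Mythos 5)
Your proposal follows exactly the paper's argument: reduce to the one-dimensional Cauchy problem in $(t,y)$ thanks to the $x$-independence of the data \eqref{eq:initialDataAtT0}, invoke the classical one-dimensional existence theory of \name{Glimm}~\cite{Glimm65}, and lift the solution back to two dimensions, with your Fubini/$x$-averaging computation merely spelling out the routine verification that the paper leaves implicit. Your closing caveat about the small--total-variation hypothesis in Glimm's theorem is a fair observation, but it concerns a point the paper's own proof glosses over in the same way (and which is harmless in the application, where the post-$T_0$ solution consists of explicit non-interacting shocks), so it does not distinguish your route from theirs.
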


    Now we are able to glue a solution from Proposition~\ref{prop:convexIntegration}, which lives on the time interval $[0,T_0]$, to a solution from Lemma~\ref{lemma:Solution_t>T} which exists on $[T_0,T)$.
	
	\begin{cor}\label{cor:gluing} 
		Let $\rho_\pm\in\R^+$, $\uu_\pm\in\R^2$ and $T>T_0>0$. Assume that there exist numbers $\mu_0,\mu_1\in\R$, $\rho_1\in\R^+$, $\uu_1\in \R^2$, $\gamma_1,\delta_1\in\R$ and $C_1\in\R^+$ that satisfy the algebraic equations and inequalities given in Proposition~\ref{prop:convexIntegration}. Let $(\rho_{t<T_0},\uu_{t<T_0})$ be a solution resulting from Proposition~\ref{prop:convexIntegration} and $(\rho_{t>T_0},\uu_{t>T_0})$ be the solution given by Lemma~\ref{lemma:Solution_t>T}. Then $(\rho,\uu) \in L^\infty((0,T)\times\R^2;\R^+\times \R^2)$, given by
		\begin{align}\label{eq:counterExampleSolution}
			(\rho,\uu)(t,\xx) : = \begin{cases}
				(\rho_{t<T_0},\uu_{t<T_0})(t,\xx),&0<t<T_0, \\
				(\rho_{t>T_0},\uu_{t>T_0})(t,\xx),&T_0<t<T,
			\end{cases}
		\end{align}
		is an admissible weak solution to the Riemann problem \eqref{eq:euler}, \eqref{eq:RiemannData}.
		
		\begin{proof}
			Adding \eqref{eq:weakSolinThm-mass} and \eqref{eq:weakSolinThm-mom} to \eqref{eq:weakSolinLemma-mass} and \eqref{eq:weakSolinLemma-mom}, respectively, shows that $(\rho,\uu)$ is indeed a weak solution. Adding \eqref{eq:weakSolinThm-en} to \eqref{eq:weakSolinLemma-en}, we find
            \begin{align*}
                &\int_{0}^T \int_{\R^2} \Big[ \Big(\half \rho |\uu|^2 + P(\rho) \Big) \partial_t \psi + \Big(\half \rho |\uu|^2 + P(\rho) + p(\rho) \Big) \uu \cdot \Grad\psi \Big] \dx\dt \\
                & + \int_{\R^2} \Big(\half \rho_0 |\uu_0|^2 + P(\rho_0) \Big) \psi(0,\cdot) \dx \\ 
                &\geq \int_{\mu_0 T_0}^{\mu_1 T_0} \int_{\R} \half \rho_1 \Big(C_1-|\uu_1|^2 \Big)\psi(T_0,\cdot) \dxcomp \dycomp \geq 0
            \end{align*}
            for all $\psi\in \Cc([0,T)\times \R^2;\R^+_0)$, where the latter inequality is due to \eqref{eq:subsolution-condition}. Thus, the weak solution $(\rho,\uu)$ is even admissible.
		\end{proof}
	\end{cor}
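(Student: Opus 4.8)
The plan is to take the function $(\rho,\uu)$ defined piecewise in \eqref{eq:counterExampleSolution} and verify the weak formulation of \eqref{eq:euler}, \eqref{eq:RiemannData} together with the energy inequality by splitting every space-time integral at the gluing time $t=T_0$ and adding the two families of identities and inequalities that are already at hand from Proposition~\ref{prop:convexIntegration} and Lemma~\ref{lemma:Solution_t>T}. First I would record the compatibility of test functions: given $(\phi,\vphi,\psi)\in\Cc([0,T)\times\R^2;\R\times\R^2\times\R^+_0)$, its restriction to $[0,T_0]$ is an admissible test function for \eqref{eq:weakSolinThm-mass}--\eqref{eq:weakSolinThm-en}, and its restriction to $[T_0,T)$ is an admissible test function for \eqref{eq:weakSolinLemma-mass}--\eqref{eq:weakSolinLemma-en}. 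Since $(\rho,\uu)$ coincides with the Proposition solution on $\{0<t<T_0\}$ and with the Lemma solution on $\{T_0<t<T\}$, and $\{t=T_0\}$ is Lebesgue-null, each flux integral over $(0,T)\times\R^2$ is the sum of the corresponding integrals over $(0,T_0)\times\R^2$ and $(T_0,T)\times\R^2$.

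For the mass equation I would add \eqref{eq:weakSolinThm-mass} to \eqref{eq:weakSolinLemma-mass}. The three terminal integrals at $t=T_0$ (carrying a minus sign in \eqref{eq:weakSolinThm-mass}) are cancelled exactly by the three initial integrals at $t=T_0$ (carrying a plus sign in \eqref{eq:weakSolinLemma-mass}); this works precisely because the data \eqref{eq:initialDataAtT0} fed into Lemma~\ref{lemma:Solution_t>T} is, by construction, the terminal profile produced by the convex-integration solution, i.e.\ the piecewise-constant state $(\rho_-,\uu_-)$, $(\rho_1,\uu_1)$, $(\rho_+,\uu_+)$ across $y=\mu_0T_0$ and $y=\mu_1T_0$. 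What survives is the weak mass identity on $[0,T)$ against the Riemann data \eqref{eq:RiemannData}, the initial term at $t=0$ being inherited from \eqref{eq:weakSolinThm-mass} via \eqref{eq:initialData}. The momentum equation is handled in exactly the same way with \eqref{eq:weakSolinThm-mom} and \eqref{eq:weakSolinLemma-mom}. Hence $(\rho,\uu)$ is a weak solution.

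For admissibility I would add \eqref{eq:weakSolinThm-en} to \eqref{eq:weakSolinLemma-en}. Here the terminal/initial terms at $t=T_0$ do not cancel completely: on the outer strips $\{y<\mu_0T_0\}$ and $\{y>\mu_1T_0\}$ the energies coincide and drop out, but on the middle strip $\{\mu_0T_0<y<\mu_1T_0\}$ the Proposition contributes $-\big(\tfrac12\rho_1C_1+P(\rho_1)\big)$ while the Lemma contributes $+\big(\tfrac12\rho_1|\uu_1|^2+P(\rho_1)\big)$, leaving the residual $\tfrac12\rho_1\big(|\uu_1|^2-C_1\big)\int_{\R}\int_{\mu_0T_0}^{\mu_1T_0}\psi(T_0,\cdot)\,\dycomp\dxcomp$. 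Moving this residual to the right-hand side, the combined inequality states that the energy functional of $(\rho,\uu)$ on $[0,T)$ is bounded below by $\tfrac12\rho_1\big(C_1-|\uu_1|^2\big)\int_{\R}\int_{\mu_0T_0}^{\mu_1T_0}\psi(T_0,\cdot)\,\dycomp\dxcomp$, and this last quantity is nonnegative since $\psi\ge0$ and $C_1-(u_1)^2-(v_1)^2>0$ by the subsolution condition \eqref{eq:subsolution-condition}. Therefore $(\rho,\uu)$ is an admissible weak solution, which is the assertion.

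I expect the only real subtlety --- the main obstacle --- to be pure bookkeeping rather than analysis: one must check that the terminal traces of the convex-integration solution at $t=T_0$, as encoded in the boundary terms of \eqref{eq:weakSolinThm-mass}--\eqref{eq:weakSolinThm-en}, are literally the same functions as the initial data \eqref{eq:initialDataAtT0} of the restart problem, and one must track the sign of the leftover energy term carefully. The point is that the surplus $\tfrac12\rho_1\big(C_1-|\uu_1|^2\big)$ is ``oscillation energy'' created by the convex-integration step on the middle strip; at the gluing instant the restart with the $1$-D Glimm solution dissipates it, and since the energy inequality only forbids energy \emph{production}, this dissipation is exactly what allows admissibility to be preserved across $t=T_0$. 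Everything else reduces to additivity of integrals.
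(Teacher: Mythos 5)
Your proposal is correct and follows essentially the same route as the paper's proof: add the weak formulations from Proposition~\ref{prop:convexIntegration} and Lemma~\ref{lemma:Solution_t>T}, observe that the $t=T_0$ boundary terms cancel for mass and momentum because \eqref{eq:initialDataAtT0} is exactly the terminal profile of the convex-integration solution, and control the leftover energy term $\half\rho_1\big(C_1-|\uu_1|^2\big)\int\psi(T_0,\cdot)$ via the subsolution condition \eqref{eq:subsolution-condition}. Your sign bookkeeping and the remark that the downward jump in kinetic energy is what keeps the energy inequality intact match the paper's argument (cf.\ Remark~\ref{rem:glueingSolutions}).
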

	
	\begin{rem} \label{rem:glueingSolutions}
        The basic idea in Corollary~\ref{cor:gluing} is to glue a convex integration solution on the time interval $[0,T_0]$ to a classical solution on $[T_0,T)$. Along the surface 
		\[
		      \{(t,\xx): t=T_0,\ \mu_0T_0<y<\mu_1T_0\},
		\]
		the kinetic energy has a jump discontinuity and jumps from $\half\rho_1C_1$ to $\half\rho_1|\uu_1|^2$. According to \eqref{eq:subsolution-condition}, this is a jump from a higher to a lower value, which ensures that the solution still fulfills the energy inequality. In a slightly different context, namely when convex integration is applied on a space-time cylinder $[0,T)\times \Omega$, a similar energy jump is observed at initial time $t=0$. The latter jump is however a jump from a lower to a higher value which destroys validity of the energy inequality, see e.g.~\name{Feireisl}~\cite[Sect.~13.2.3]{Feireisl16_1}. When designing our solution, we were inspired by such an unphysical energy jump, which becomes physical when time is reversed. 
	\end{rem}

	\section{Proof of Theorem \ref{thm:mainTheorem}} \label{sec:proof}

    In order to prove Theorem~\ref{thm:mainTheorem}, we will present a particular choice of Riemann initial states $(\rho_\pm,\uu_\pm)\in\R^+\times\R^2$ and construct a solution $(\rho_{\example},\uu_{\example})$ of the form \eqref{eq:counterExampleSolution} (see Corollary~\ref{cor:gluing}) whose action $\mathcal{A}[\rho_{\example},\uu_{\example}]$ is smaller than the action $\mathcal{A}[\rho_{1d},\uu_{1d}]$ of the corresponding 1-D solution $(\rho_{1d},\uu_{1d})$. 
	
	\subsection{Special Riemann data and the corresponding 1-D solution}\label{subsec:RiemannProblemExplicit}
	
	Consider Riemann initial data \eqref{eq:RiemannData} with
	\begin{align}\label{eq:explicitRiemannData}
        \begin{split}
        	\rho_- &= 1,\quad u_-=0,\quad  v_-=\frac{57\sqrt{35}}{10} + \frac{59 \sqrt{915}}{30}, \\
		      \rho_+ &= \rho_-,\quad u_+=0,\quad v_+=-v_-.
        \end{split}
	\end{align}
	The corresponding 1-D solution consists of two shock waves and is given by	
	\begin{align*} 
		(\rho_{1d},\uu_{1d})(t,\xx) = \begin{cases}
			(\rho_-,\uu_-),& y<-\sigma t,\\ 
			(\rho_M,0),& -\sigma t<y<\sigma t,\\
			(\rho_+,\uu_+),& \sigma t<y,
		\end{cases} 
	\end{align*}	
    where $\rho_M$ is determined by 
    \begin{align*}
        \rho_M &> \max\{ \rho_\pm \}, \\
        0 &= v_- - \sqrt{\frac{(\rho_M-\rho_-)(p(\rho_M) - p(\rho_-))}{\rho_M\rho_-}},
    \end{align*}
    and $\sigma= \frac{\rho_- v_-}{\rho_M - \rho_-}>0$, see e.g.~\cite[Lemma~2.4]{ChiKre14}, \cite[Prop.~7.1.1]{Markfelder} or standard textbooks on hyperbolic conservation laws like the one by \name{Dafermos}~\cite{Dafermos}. It is simple to verify that
    \begin{align}\label{eq:estimateRhoMSigma}
        \rho_M<94\quad \text{and}\quad \sigma<\frac{11}{10}.  %es ist rhoM ~ 93.717937 und \sigma ~1.0053
    \end{align}

    We will compute the action $\mathcal{A}[\rho_{1d},\uu_{1d}]$ of the 1-D solution $(\rho_{1d},\uu_{1d})$ in Section~\ref{subsec:ActionComp} below.

	\subsection{Construction of another solution} \label{subsec:constructionCounterEx}

    Next we construct another solution to the Riemann problem \eqref{eq:euler}, \eqref{eq:RiemannData} with Riemann data \eqref{eq:explicitRiemannData} by invoking Corollary~\ref{cor:gluing}. Let us first fix $T>0$ and $T_0=\frac{T}{2}$. We have to find $\mu_0,\mu_1\in\R$, $\rho_1\in\R^+$, $\uu_1\in \R^2$, $\gamma_1,\delta_1\in\R$ and $C_1\in\R^+$ that satisfy the algebraic equations and inequalities given in Proposition~\ref{prop:convexIntegration}. We choose the following values
    \begin{align*}
		\rho_1 &= 3, & \gamma_1&= -\frac{1121\sqrt{1281}}{40} - \frac{28013}{24},\\
		u_1 &= 0, & \delta_1&=0,\\
		v_1 &= 0, & C_1 &= \frac{1121\sqrt{1281}}{20} + \frac{28037}{12}. 
	\end{align*}
	%\begin{align*}
	%	\tilde{\eps}_1 = 1,\quad
	%	\eps_1 =  \frac{1121\sqrt{1281}}{20} + \frac{28025}{12}.
	%\end{align*}
	Furthermore, we choose
	\[
	\mu_0  = -\frac{57\sqrt{35}}{20} - \frac{59\sqrt{915}}{60} ,\qquad \mu_1  = -\mu_0.
	\]	
	One can easily verify that the equations and inequalities given in Proposition~\ref{prop:convexIntegration} are satisfied. Hence, Proposition~\ref{prop:convexIntegration} and Corollary~\ref{cor:gluing} give us infinitely many solutions to the Riemann problem \eqref{eq:euler}, \eqref{eq:RiemannData} with data \eqref{eq:explicitRiemannData} of the form \eqref{eq:counterExampleSolution}.
	
	We choose one of these solutions and denote it by $(\rho_{\example},\uu_{\example})$, and will further investigate its structure for $T_0<t<T$. Therefore, we consider the solution to the initial value problem at $t=T_0$ with initial data \eqref{eq:initialDataAtT0} containing two jump discontinuities located at
	\begin{align}\label{eq:pointsy_0y_1}
	   y_0 = \mu_0T_0\quad\text{and}\quad y_1 =  \mu_1T_0 = -y_0,
	\end{align}
	respectively.
	Each of these discontinuities can be regarded as a separate Riemann problem. One can easily verify\footnote{For the classical solution of one-dimensional Riemann problems to the Euler equations \eqref{eq:euler}, we again refer to \cite[Lemma~2.4]{ChiKre14}, \cite[Prop.~7.1.1]{Markfelder} or standard textbooks on hyperbolic conservation laws like \cite{Dafermos}.} that the solution for each of these Riemann problems consists of two shock waves as is indicated in Figure \ref{fig:solutionInSpaceTime}. The corresponding intermediate states are given as follows
	\begin{align*} 
		\rho_2 &= 60, & u_2&=0, & v_2 &= \frac{57\sqrt{35}}{10},\\
		\rho_3&=\rho_2, & u_3&=0, & v_3 &= -v_2,
	\end{align*}
	with corresponding speeds
	\begin{align*}
        \mu_2 = -\frac{\sqrt{915}}{30} + \frac{57\sqrt{35}}{10},\qquad
		\mu_3 = 6\sqrt{35},\qquad \mu_4 =-\mu_3,\qquad \mu_5 = -\mu_2.
	\end{align*} 
    See Figure \ref{fig:solutionInSpaceTime} for notation.
	The two discontinuities emerging from $y_0$ and $y_1$ (cf.~\eqref{eq:pointsy_0y_1}), with speeds $\mu_3$ and $\mu_4$, respectively, move towards each other. Comparing their speeds, one verifies that they do not interact for times less than $T = 2T_0$.

    \begin{figure}[hbt]
		\begin{center}
			\includegraphics[trim =0mm 0mm 0mm 0mm ,scale=0.6]{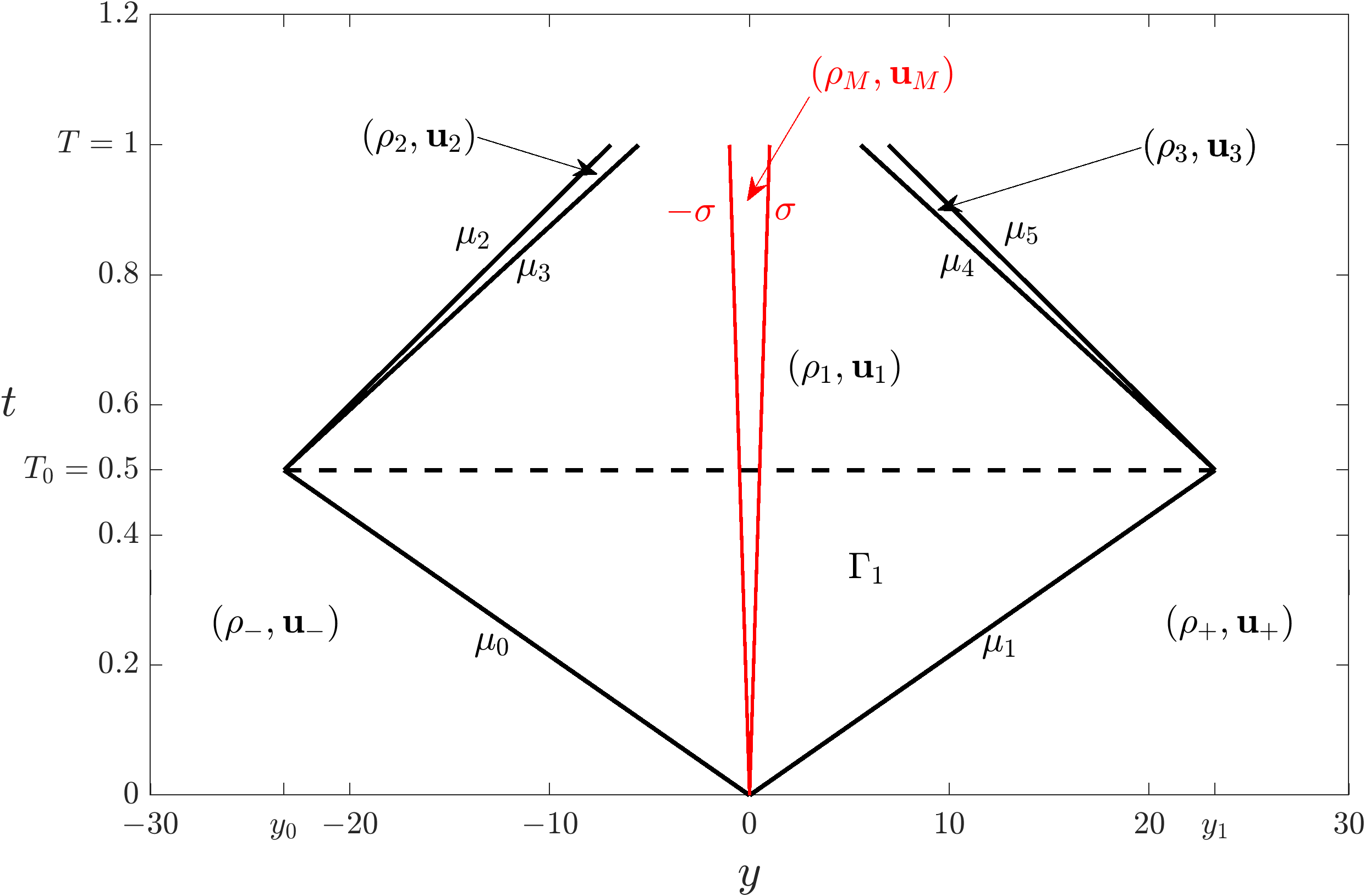}
		\end{center}
		\caption{The solution $(\rho_{\example},\uu_{\example})$ (black), which is constructed in Section~\ref{subsec:constructionCounterEx}, and the 1-D solution $(\rho_{1d},\uu_{1d})$ (red) in the $y-t$ plane for $T_0=\half$.}
		\label{fig:solutionInSpaceTime}
	\end{figure}

	\subsection{Comparison of action} \label{subsec:ActionComp} 
    
	We can now compare the action of the 1-D solution $(\rho_{1d},\uu_{1d})$ with the action of the solution $(\rho_{\example},\uu_{\example})$ constructed in Section \ref{subsec:constructionCounterEx}. To this end, we have to fix $L_1,L_2>0$, see Remark~\ref{rem:choice-of-L}. In particular, we choose $L_2>0$ such that the two solutions $(\rho_{1d},\uu_{1d})$ and $(\rho_{\example},\uu_{\example})$ coincide for all $(t,\xx)$ with $|y|>L_2$. This is achieved by setting $L_2 =-y_0$, cf.~Figure \ref{fig:solutionInSpaceTime}. Furthermore, since $(\rho_{1d},\uu_{1d})$ is independent of $x$ and $(\rho_{\example},\uu_{\example})$ can be regarded as being periodic in $x$ with period\footnote{In fact, choosing the period to be $2$ is arbitrary. Any other period is possible just as well.} $2$, we fix $L_1=1$. 
	\begin{lemma}\label{lemma:ActionComp}
        Let $T>0$ and $(\rho_{\example},\uu_{\example})$ be the solution constructed in Section \ref{subsec:constructionCounterEx} with 
        $T_0=\frac{T}{2}$. Then one has 
		\begin{align*}
			\mathcal{A}[\rho_{\example},\uu_{\example}] = K_{\example}T^2,\quad\mathcal{A}[\rho_{1d},\uu_{1d}] &= K_{1d}T^2, 
		\end{align*}
		with 
		\begin{align*}
			K_{\example} &=  \frac{25590093\sqrt{35} + 4675573\sqrt{915}}{800},\\
			K_{1d} &= \frac{3349377\sqrt{35}}{100} + \frac{6149393\sqrt{915}}{900} -\sigma \left(2\rho_M^2+\frac{1121\sqrt{1281}}{10} + \frac{28045}{6}\right).
		\end{align*}
		\begin{proof}
            One has
            \begin{align*}  
                \mathcal{A}[\rho_{\example},\uu_{\example}] = \mathcal{A}_{t<T_0} + \mathcal{A}_{t>T_0}
            \end{align*}
            with
            \begin{align*}
		          \mathcal{A}_{t<T_0}&:= \int_{0}^{T_0}\int_{-L_1}^{L_1}\int_{-L_2}^{L_2} \Big(\half\rho_\example|\uu_\example|^2 - P(\rho_\example) \Big)\dycomp\dxcomp\dt, \\
                \mathcal{A}_{t>T_0} &:= \int_{T_0}^{T}\int_{-L_1}^{L_1}\int_{-L_2}^{L_2} \Big(\half\rho_\example|\uu_\example|^2 - P(\rho_\example) \Big)\dycomp\dxcomp\dt.   
	        \end{align*}
            The ``action density'' of the constant states is given by
	        \begin{align*}
		        a_\pm := \half\rho_\pm|\uu_\pm|^2 - P(\rho_\pm),\qquad a_{1,2,3} := \half\rho_{1,2,3}|\uu_{1,2,3}|^2 - P(\rho_{1,2,3}),
	        \end{align*} 
            where $a_+=a_-$ and $a_3=a_2$. According to Proposition~\ref{prop:convexIntegration}, we have $\rho_{\example}=\rho_1$ and $|\uu_{\example}|^2 =C_1$ almost everywhere in $\Gamma_1$.
	        Hence, the ``action density'' inside $\Gamma_1$ reads
	        \begin{align*}
		          a_{\wild}:=\half\rho_{1}C_1 - P(\rho_1). 
	        \end{align*}
	        We compute
            \begin{align}
            \begin{split}
	       	    \mathcal{A}_{t<T_0}&=4\int_0^{T_0}\int_{-L_2}^{0} \Big(\half\rho_{\example}|       \uu_{\example}|^2 - P(\rho_{\example})\Big)\; \dycomp \dt\\ 
		          &= 4\int_0^{T_0}\left(\int_{y_0}^{\mu_0t} a_{-}\; \dycomp + \int_{\mu_0t}^{0} a_{\wild}\; \dycomp  \right)\dt\\
		          %&= 4L_1\int_0^{T_0}\left((\mu_0t - y_0) A_- - \mu_0 A_{\wild}t \right)\dt\\
		          %&= 4L_1\int_0^{T_0}\left(\mu_0(A_- - A_{\wild})t -y_0A_-\right)\dt\\
                %&= 4L_1\left(\half\mu_0(A_- - A_{\wild})t^2  -y_0A_-t\right)\bigg |_{t=0}^{t=T_0}\\
                &= -\half \mu_0\left(a_- + a_{\wild}\right)T^2.
            \end{split}\label{eq:calculationIntegral1}
	        \end{align}	
            Furthermore, one has
	        \begin{align}
            \begin{split}
		          \mathcal{A}_{t>T_0}&=4\int_{T_0}^T\int_{-L_2}^{0} \Big(\half\rho_{\example}|\uu_{\example}|^2 - P(\rho_{\example})\Big)\; \dycomp \dt\\\
		          &= 4\int_{T_0}^T\left(\int_{y_0}^{\mu_2(t-T_0)+y_0} a_-\; \dycomp + \int_{\mu_2(t-T_0)+y_0}^{\mu_3(t-T_0)+y_0} a_2\; \dycomp  + \int_{\mu_3(t-T_0)+y_0}^{0} a_1\; \dycomp \right)\dt\\
		         % &= 4L_1\int_{T_0}^T\left( \mu_2(t-T_0)A_- + (\mu_3-\mu_2)(t-T_0)A_2 - (\mu_3(t-T_0) +y_0)A_1  \right) \dt\\
               % &= 4L_1\int_{T_0}^T  (\mu_2A_- + (\mu_3-\mu_2)A_2 - \mu_3A_1 )t +  (-\mu_2T_0A_- - (\mu_3-\mu_2)T_0A_2 + (\mu_3T_0-y_0)A_1) \dt \\     
		         % &= 4L_1\left( \half(\mu_2A_- + (\mu_3-\mu_2)A_2 - \mu_3A_1 )t^2 +  (-\mu_2T_0A_- - (\mu_3-\mu_2)T_0A_2 + (\mu_3T_0-y_0)A_1)t  \right)\bigg |_{t=T_0}^{t=T}\\
                &=-\half ( 2\mu_0a_1 + \mu_2(a_2-a_-)+\mu_3(a_1-a_2) )T^2.
            \end{split}\label{eq:calculationIntegral2}
	        \end{align}  
	        All coefficients involved in \eqref{eq:calculationIntegral1}, \eqref{eq:calculationIntegral2} are given explicitly by the construction in Section \ref{subsec:constructionCounterEx}. Simple computations yield 
		    \begin{align*}
		       	a_- &= \frac{1121\sqrt{1281}}{20} + \frac{28045}{12}, & a_{\wild} &= \frac{3363\sqrt{1281}}{40} + \frac{27965}{8},\\
		        a_1&=-9, & a_2&=\frac{61029}{2},
		      \end{align*}
	        from which one can simply verify that $\mathcal{A}[\rho_{\example},\uu_{\example}] = K_{\example}T^2$ for $K_{\example}$ as given in the assertion. 
            
            Similar to the above computations we set
	        \[
	        a_M:=\half \rho_M|\uu_M|^2 - P(\rho_M),
	        \]
	        and calculate
	        \begin{align*} 
		          \mathcal{A}[\rho_{1d},\uu_{1d}]&=4\int_0^T\int_{-L_2}^{0} \Big(\half\rho_{1d}|\uu_{1d}|^2 - P(\rho_{1d}) \Big)\dycomp \dt\\\
		          &= 4\int_0^T\left(\int_{y_0}^{-\sigma t} a_-\; \dycomp + \int^{0}_{-\sigma t} a_M\; \dycomp \right)\dt \\
		       % &= 4L_1\int_0^T\left( -(\sigma t+y_0)A_-  + \sigma tA_M \right) \dt\\
              % &= 4L_1\int_0^T\left( \sigma(A_M-A_-)t -A_-y_0\right) \dt\\
		         % &= 4L_1\left(\half\sigma(A_M-A_-)t^2 - A_-y_0t \right)\bigg |_{t=0}^{t=T}\\
                 &=  2\left( \sigma(a_M - a_-) - \mu_0a_-\right)T^2.
	        \end{align*}	
            Inserting the explicit value for $a_-$ and $\mu_0$ as well as $a_M=-\rho_M^2$ yields the assertion. 
	       \end{proof}
    \end{lemma}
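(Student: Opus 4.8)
The statement to prove is Lemma~\ref{lemma:ActionComp}, which asserts closed-form values for the two actions. The proof is essentially a bookkeeping exercise: the action \eqref{eq:actionDef} of each solution is a space-time integral of the affine-in-density-but-quadratic-in-velocity quantity $\half\rho|\uu|^2 - P(\rho)$, and both solutions are piecewise constant on explicitly described wedges in the $(t,y)$-plane (recall both are $x$-independent, the convex integration solution only in the sense that $|\uu_\example|^2 = C_1$ a.e.\ in $\Gamma_1$ by Proposition~\ref{prop:convexIntegration}, which is exactly what the action density sees). So the plan is: (i) split $\mathcal{A}[\rho_\example,\uu_\example]$ at $t=T_0$ into $\mathcal{A}_{t<T_0}+\mathcal{A}_{t>T_0}$; (ii) on each time slice, identify the finitely many $y$-intervals on which the solution is constant, using the wedge geometry from Section~\ref{subsec:constructionCounterEx} and the fact (checked there via the speed ordering) that the shocks emanating from $y_0$ and $y_1$ do not collide before $T=2T_0$; (iii) integrate the constant ``action densities'' $a_\pm$, $a_{1,2,3}$, $a_\wild$ over those intervals, then over $t$, using $L_2=-y_0=-\mu_0 T_0$ and the symmetry $y\mapsto -y$ to get the factor $4$ from $[-L_1,L_1]\times[-L_2,0]$; (iv) do the analogous (simpler) computation for the two-shock 1-D solution; (v) substitute the explicit numerical values of all states and speeds from \eqref{eq:explicitRiemannData} and Section~\ref{subsec:constructionCounterEx} and simplify.

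First I would set up the geometry carefully. For $0<t<T_0$ the solution $(\rho_\example,\uu_\example)$ on $-L_2<y<0$ is $(\rho_-,\uu_-)$ on $(y_0,\mu_0 t)$ and has action density $a_\wild$ on $(\mu_0 t,0)$ (this is $\Gamma_1\cap\{y<0\}$); integrating $\int_0^{T_0}\big[(\mu_0 t - y_0)a_- + (-\mu_0 t)a_\wild\big]\dt$ with $y_0=\mu_0 T_0$ and multiplying by $4$ gives \eqref{eq:calculationIntegral1}. For $T_0<t<T$, the slice $-L_2<y<0$ is cut by the two right-moving shocks from $y_0$ with speeds $\mu_2<\mu_3$ into the three pieces $(\rho_-,\uu_-)$, $(\rho_2,\uu_2)$, $(\rho_1,\uu_1)$ — here I would use that the shock from $y_1$ with speed $\mu_4=-\mu_3<0$ stays in $y>0$ on this time range, so it does not enter the region $y<0$, which is the content of the ``no interaction before $T$'' remark — and integrate as in \eqref{eq:calculationIntegral2}, writing the $y$-lengths in terms of $\mu_2,\mu_3,y_0$ and using $T_0=T/2$, $y_0=\mu_0 T_0=\mu_0 T/2$. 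For the 1-D solution the slice $y<0$ is split by the single shock $y=-\sigma t$ into $(\rho_-,\uu_-)$ and $(\rho_M,0)$, giving the stated formula $2\big(\sigma(a_M-a_-)-\mu_0 a_-\big)T^2$.

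Then I would compute the action densities explicitly. Since all relevant velocities except $v_-$, $v_\pm$, $v_2$, $v_3$ vanish, and $P(\rho)=\rho^2$, we get $a_-=a_+=\half v_-^2 - 1$, $a_\wild=\tfrac32 C_1 - 9$, $a_1=-9$, $a_2=a_3=30\,v_2^2 - 3600$, $a_M=-\rho_M^2$; plugging in the nested-radical values of $v_-$, $C_1$, $v_2$ and simplifying yields the stated $a_-=\tfrac{1121\sqrt{1281}}{20}+\tfrac{28045}{12}$ etc. Substituting these together with $\mu_0=-\tfrac{57\sqrt{35}}{20}-\tfrac{59\sqrt{915}}{60}$, $\mu_2=-\tfrac{\sqrt{915}}{30}+\tfrac{57\sqrt{35}}{10}$, $\mu_3=6\sqrt{35}$ into \eqref{eq:calculationIntegral1}+\eqref{eq:calculationIntegral2} gives $K_\example T^2$, and substituting $a_-$, $a_M=-\rho_M^2$, $\mu_0$ into the 1-D formula gives $K_{1d}T^2$ (leaving $\rho_M$ and $\sigma$ implicit, as in the statement).

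**Main obstacle.** There is no conceptual difficulty — the real work is (a) being sure one has correctly enumerated the constant regions in each time slice, in particular that no wave interaction occurs in $(T_0,T)$ and that the right-moving waves from $y_0$ never leave $\{y<0\}$ before reaching $0$ only at the appropriate rate, all of which was arranged in Section~\ref{subsec:constructionCounterEx}; and (b) carrying the arithmetic with the nested surds $\sqrt{35},\sqrt{915},\sqrt{1281}$ through without error. The cleanest way to organize (b) is to keep each $a$ and each $\mu$ as an exact symbolic expression, expand the bilinear combinations $\mu_i(a_j-a_k)$ term by term, and collect coefficients of $\sqrt{35}$ and $\sqrt{915}$ separately (noting $\sqrt{35}\cdot\sqrt{915}=\sqrt{35\cdot 915}=\sqrt{32025}$ only appears in cross terms that must cancel for the final answer to be of the stated form — a useful internal consistency check). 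I expect that verifying these cancellations and reproducing the stated numerators is the only place an error could creep in.
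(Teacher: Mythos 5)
Your proposal is correct and follows the paper's proof essentially verbatim: split at $t=T_0$, integrate the constant ``action densities'' over the explicit wedges in the $(t,y)$-plane (with the factor $4$ from $L_1=1$ and the $y\mapsto-y$ symmetry, $L_2=-y_0$, and the no-interaction check before $T=2T_0$), then substitute the explicit states and speeds; your intermediate values $a_-,a_{\wild},a_1,a_2,a_M$ agree with the paper's. One minor correction to your bookkeeping remark: the mixed surds do not cancel but reduce, since $\sqrt{35}\sqrt{1281}=7\sqrt{915}$ and $\sqrt{915}\sqrt{1281}=183\sqrt{35}$ (and $K_{1d}$ even retains a $\sqrt{1281}$ term through the factor multiplying $\sigma$), so these products contribute to the final coefficients of $\sqrt{35}$ and $\sqrt{915}$ rather than vanishing.
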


    We are now finally able to prove the main theorem of this paper.
	\begin{proof}[Proof of Theorem~\ref{thm:mainTheorem}]
        Let $T>0$ and $(\rho_{\example},\uu_{\example})$ be the solution constructed in Section \ref{subsec:constructionCounterEx} with
        $T_0=\frac{T}{2}$.
		Lemma \ref{lemma:ActionComp} yields that 
		\begin{align*}
			\mathcal{A}[\rho_{\example},\uu_{\example}]< \mathcal{A}[\rho_{1d},\uu_{1d}] 
		\end{align*}
		if and only if
		\begin{align*} 
		      K_{\example}-K_{1d} = \left(2\rho_M^2 + \frac{1121\sqrt{1281}}{10}+\frac{28045}{6} \right)\sigma - \frac{1204923\sqrt{35}}{800} -\frac{7114987\sqrt{915}}{7200}<0,
		\end{align*}
        which can easily be verified using \eqref{eq:estimateRhoMSigma}. So, $(\rho_{\example},\uu_{\example})$ indeed beats the 1-D solution $(\rho_{1d},\uu_{1d})$ in the sense that it has a smaller action. Consequently, the 1-D solution does not satisfy the least action admissibility principle.
	\end{proof}

\section{Concluding remarks} \label{sec:concluding-remarks}
For each fixed $T>0$ used to define the action \eqref{eq:actionDef}, we have employed convex integration to construct an admissible weak solution $(\rho_{\example},\uu_{\example})$ that beats the 1-D solution $(\rho_{1d},\uu_{1d})$ in terms of the least action admissibility principle. The decisive mechanism for the solution $(\rho_{\example},\uu_{\example})$ to have a lower action than $(\rho_{1d},\uu_{1d})$ is the downward jump in kinetic energy along a surface in space time, see Remark~\ref{rem:glueingSolutions}. To illustrate our strategy to find $(\rho_{\example},\uu_{\example})$, we consider the function
\[
	A[\rho,\uu](t):=\int_{-L_1}^{L_1}\int_{-L_2}^{L_2} \Big(\half\rho|\uu|^2-P(\rho)\Big) \dycomp \dxcomp,
\]
for any admissible weak solution $(\rho,\uu)$, so that 
$$
    \mathcal{A}[\rho,\uu]= \int_0^T A[\rho,\uu](t) \dt.
$$
As can be easily seen from the proof of Lemma~\ref{lemma:ActionComp}, the function $t\mapsto A[\rho_{\example},\uu_{\example}](t)$ is piecewise linear, see Figure \ref{fig:2a}. As the kinetic energy undergoes a jump at time $T_0$ from a higher to a lower value, cf.~Remark \ref{rem:glueingSolutions}, the function $t\mapsto A[\rho_{\example},\uu_{\example}](t)$ exhibits the same behavior. 

\begin{figure}[htb] 
	\centering
	\subfloat[{The functions $A[\rho,\uu]$.}\label{fig:2a}]{
		\centering
		\includegraphics[width=0.43\textwidth]{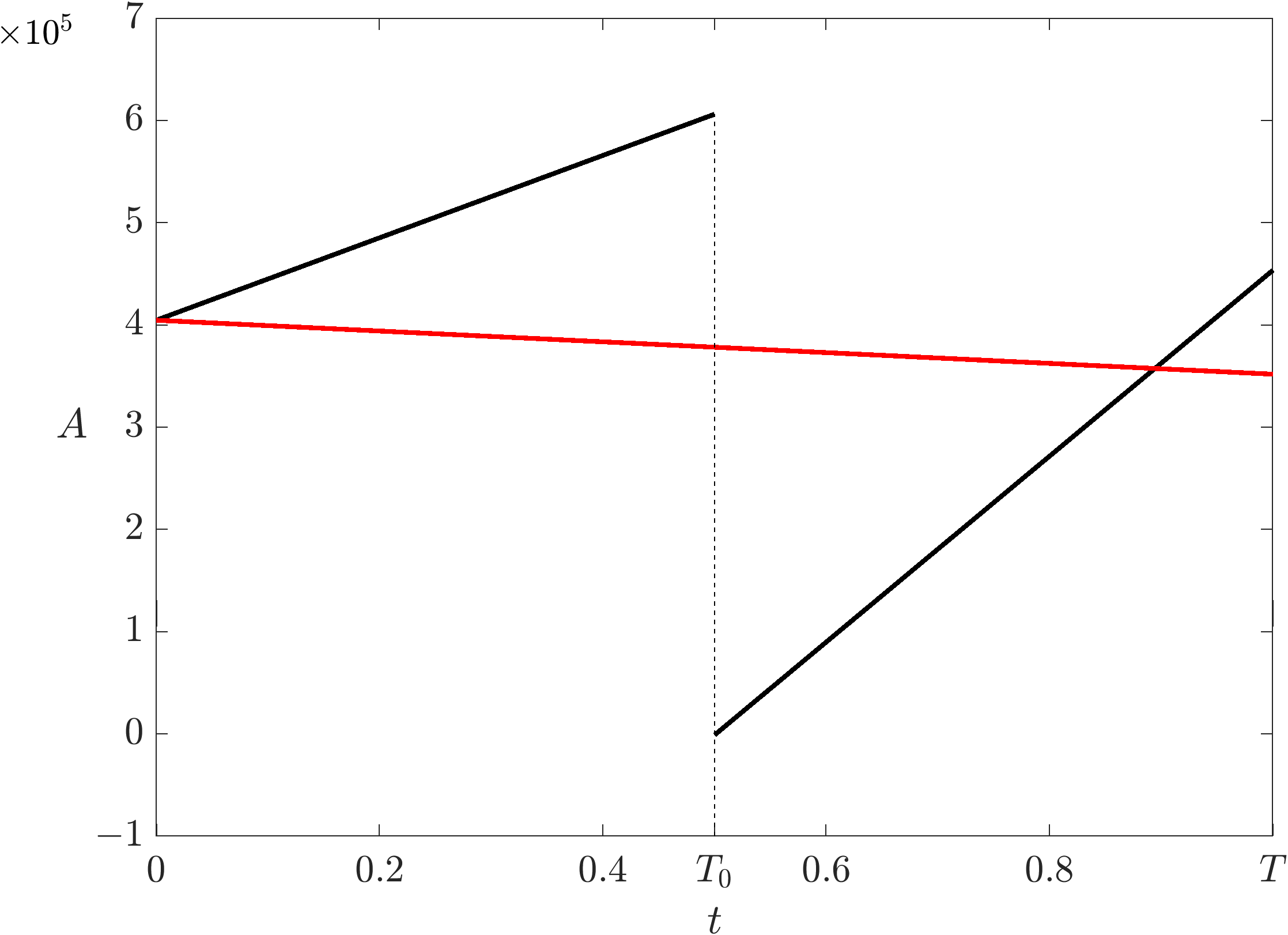}
	}
	\hspace{1.5cm}
	\subfloat[{The functions $\widetilde{\mathcal{A}}[\rho,\uu]$.}\label{fig:2b}]{
		\centering
		\includegraphics[width=0.43\textwidth]{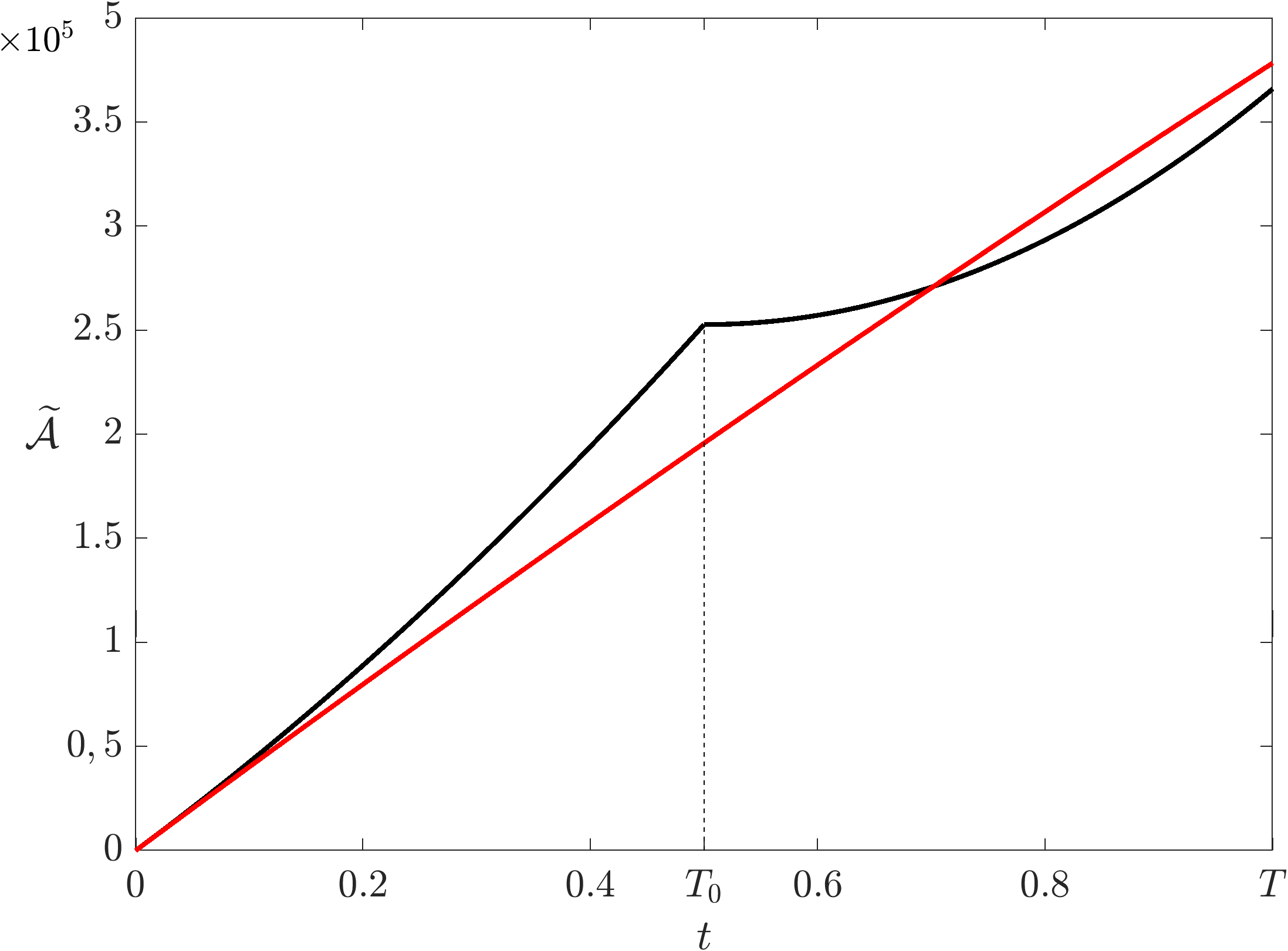}
	}
	\caption{The functions $A[\rho,\uu]$ and $\widetilde{\mathcal{A}}[\rho,\uu]$ for the solutions $(\rho_{1d},\uu_{1d})$ (red) and $(\rho_{\example},\uu_{\example})$ (black), respectively, where $T=1$.} 
	\label{fig:RiemannSolutionPhaseSpace}
\end{figure} 

Let us next consider the action on the time interval $[0,t]$ as a function dependent on the final time,
\begin{align*}
	\widetilde{\mathcal{A}}[\rho,\uu](t) := \int_{0}^{t}A[\rho,\uu](s)\ds, 
\end{align*}
so that $\mathcal{A}[\rho,\uu]= \widetilde{\mathcal{A}}[\rho,\uu](T)$. Since $t\mapsto A[\rho_{\example},\uu_{\example}](t)$ is piecewise linear, the function $t\mapsto \widetilde{\mathcal{A}}[\rho_{\example},\uu_{\example}](t)$ is piecewise quadratic as shown in Figure \ref{fig:2b}. For $t<T_0$, the solutions constructed in Section~\ref{sec:wildSolutions} coincide with the solutions generated in \cite{ChiKre14}. Those solutions are known to have a larger action than the 1-D solution, cf.~\cite{GGKS24pre}. Consequently, 
\begin{equation} \label{eq:dep-finaltime1}
    \widetilde{\mathcal{A}}[\rho_{\example},\uu_{\example}](t) > \widetilde{\mathcal{A}}[\rho_{1d},\uu_{1d}](t)\qquad \text{ for }t<T_0,
\end{equation}
i.e.~the action of the convex integration solution is larger than the action of the 1-D solution, see Figure~\ref{fig:2b}. However, as time increases, the convex integration solution beats the 1-D solution, i.e.
\begin{equation} \label{eq:dep-finaltime2}
    \widetilde{\mathcal{A}}[\rho_{\example},\uu_{\example}](t) < \widetilde{\mathcal{A}}[\rho_{1d},\uu_{1d}](t)\qquad \text{ for }T_0\ll t\leq T,
\end{equation}
see again Figure~\ref{fig:2b}. In fact, it is the jump in kinetic energy (or, equivalently, in $A[\rho_{\example},\uu_{\example}]$) at time $T_0$ that yields a kink in the function $t\mapsto \widetilde{\mathcal{A}}[\rho_{\example},\uu_{\example}](t)$ which finally leads to a lower action as desired. 

Our example $(\rho_{\example},\uu_{\example})$ also shows that the question which solution is preferred by the least action admissibility principle depends on the final time, see \eqref{eq:dep-finaltime1} and \eqref{eq:dep-finaltime2} above. This is another counterintuitive fact which suggests that the least action admissibility principle is not the right selection principle in the context of the Euler equations \eqref{eq:euler}.

\section*{Acknowledgements}
Funded by the Deutsche Forschungsgemeinschaft (DFG, German Research Foundation) - SPP 2410 \emph{Hyperbolic Balance Laws in Fluid Mechanics: Complexity, Scales, Randomness (CoScaRa)}, within the Project 525935467 \emph{Convex integration: towards a mathematical understanding of turbulence, Onsager conjectures and admissibility criteria}.

\printbibliography[heading=bibintoc]

\end{document}